\documentclass[12pt]{amsart}
\usepackage[english]{babel}
\usepackage{amsfonts,amssymb,latexsym,amscd}

\theoremstyle{plain}
\newtheorem{theorem}{Theorem}

\newtheorem{proposition}{Proposition}
\newtheorem{corollary}{Corollary}
\newtheorem{problem}{Problem}
\theoremstyle{definition}
\newtheorem{definition}{Definition}
\theoremstyle{remark}

\newtheorem{example}{Example}

\begin{document} 
 
\author{Pavel S. Gevorgyan}
\address{Moscow Pedagogical State University}
\email{pgev@yandex.ru}

\author{ A.\,A.~Nazaryan}
\address{Yerevan State University}
\email{aram1nazaryan@gmail.com}

\title{On Orbits and Bi-invariant Subsets of Binary $G$-Spaces}

\begin{abstract}
Orbits and bi-invariant subsets of binary $G$-spaces are studied. The problem of the distributivity of a binary action of a group $G$ on a space $X$, which was posed in 2016 by one of the authors, is solved. 
\end{abstract}

\keywords{binary operation, topological group, groups of homeomorphisms, representations of a topological groups}
\subjclass{54H15; 57S99}

\maketitle

\section{Introduction}
The notions of a binary action of a group $G$ on a topological space $X$ and of a binary $G$-space were introduced in \cite{Gev}. The group $H_2(X)$ of all invertible continuous binary operations on a space $X$ acts binarily on the space $X$. Moreover, if a group $G$ acts binarily and effectively on $X$, then $G$ is a subgroup of $H_2(X)$ \cite{Gev2}. The category $G$-$Top^2$ of binary $G$-spaces and bi-equivariant mappings is a natural extension of the category $G$-$Top$ of all $G$-spaces and equivariant mappings, which, in turn, is an extension of the category $Top$; i.e.,
$$Top\subset G\text{-}Top\subset G\text{-}Top^2.$$ 

In \cite{Gev2}, the notion of a distributive binary $G$-space $X$ was introduced. One of the reasons why this notion is important is the special role played by distributive subgroups of the group $H_2(X)$ of all invertible continuous binary operations on $X$. For example, any topological group is a distributive subgroup of the group of invertible binary operations of some space \cite{Gev-izv}. This statement is the binary topological counterpart of Cayley’s classical theorem on the representation of any finite group by unary operations (permutations).

This paper is concerned with orbits and bi-invariant sets in binary $G$-spaces. We emphasize that transferring the basic notions of the theory of $G$-spaces to the theory of binary $G$-spaces and studying them is not always easy. Substantial differences arise already in considering bi-invariant sets. For example, the union of bi-invariant subsets of a binary $G$-space is not necessarily a bi-invariant subset. Difficulties arise also in describing orbits of a binary $G$-space. Given a binary $G$-space $X$, the set $G(x, x)$ is not generally the orbit of $x \in X$, and orbits of points may intersect.

In binary $G$-spaces, orbits are described recursively. As a result, we obtain finitely or infinitely generated orbits. In a distributive binary $G$-space $X$, all orbits are finitely generated. Moreover, for any $x \in X$, the set $G(x, x)$ is bi-invariant; therefore, this set is the orbit of $x$, because it is the minimal bi-invariant subset containing $x$ \cite{Gev2}. This gives rise to the following natural question: Is it true that if all orbits of a binary $G$-space $X$ are of the form $G(x, x)$, then $X$ is a distributive binary $G$-space? This problem was posed in \cite{Gev2}. The present paper contains, in particular, a solution of this problem.

 
\section{Basic Notions And Notation}

Let $G$ be any topological group, and let $X$ be any topological space.

The space $X$ is called a \emph{$G$-space} if it is equipped with a \emph{continuous action} $\alpha$ of the group $G$, i.e., a continuous mapping $\alpha :G\times X\to X$ satisfying the conditions
$$\alpha(gh, x)=\alpha(g, \alpha(h,x)) \quad \text{and} \quad  \alpha(e,x)=x$$
or, in the notation $\alpha(g,x)=gx$,
$$(gh)x=g(hx) \quad \text{and} \quad  ex=x,$$
where $e$ is the identity element of $G$, for any $g, h \in G$ and $x \in X$.

The set 
$$\text{Ker}\,\alpha=\{g\in G; \ gx=x\}$$
is called the \emph{kernel} of the action $\alpha$. If Ker\,$\alpha=e$, then $\alpha$ is called an \emph{effective action} and $X$, an \emph{effective
G-space}.

\begin{definition} [see \cite{Gev}]\label{def1} 
A \emph{binary action} of a topological group $G$ on a space $X$ is a continuous mapping $\alpha :G\times X^2\to X$ such that, for any $g,h\in G$ and $x_1,x_2 \in X$,
\begin{equation}\label{eq(1)}
gh(x_1,x_2)=g(x_1, h(x_1,x_2)),
\end{equation}
\begin{equation}\label{eq(2)}
e(x_1,x_2)=x_2,
\end{equation}
where $g(x_1,x_2) = \alpha (g, x_1,x_2)$.
A space $X$ equipped with a binary action $\alpha$ of a group $G$, that is, a triple $(G, X, \alpha)$, is called a \emph{binary G-space}.
\end{definition}

Given $A\subset X$ and $g\in G$, we set
$$g(A,A)=\{g(a_1,a_2); \quad a_1,a_2\in A\}.$$

Similarly, for $K\subset G$, we set
$$K(A,A)=\{g(a_1,a_2); \quad g\in K, a_1,a_2\in A\}.$$ 

For brevity, we sometimes write $g(A)$ and $K(A)$ instead of $g(A,A)$ and $K(A,A)$, respectively.

A subset  $A\subset X$ is said to be \emph{bi-invariant} if $G(A,A)=A$. A bi-invariant subset $A$ of $X$ is itself a binary $G$-space; it is called a \emph{binary G-subspace}.

\begin{definition}[see \cite{Gev2}]\label{def-orbita}
The orbit of an element $x$ of a binary $G$-space $X$ is the minimal bi-invariant set $[x]\subset X$ containing $x$.
\end{definition}

Obviously, the set $G(x,x)$ is a subset of the orbit $[x]$:
$$G(x,x)\subset [x].$$

\begin{definition}[see \cite{Gev2}]
A binary $G$-space $X$ is said to be \emph{distributive} if
\begin{equation}\label{eq1-1}
g(h(x,x'), h(x,x''))=h(x,g(x', x'')).
\end{equation}
for any $x,x',x'' \in X$ and $g,h \in G$.
\end{definition}

The class of distributive binary $G$-spaces plays an important role in the theory of binary $G$-spaces. 

Let $H$ be a subgroup of $G$. The set
$$N_G(H)=\{g\in G; \quad g^{-1}Hg=H\}$$
is called the \emph{normalizer} of the subgroup $H$ in the group $G$. The normalizer $N_G(H)$ is the maximal subgroup of $G$ containing $H$ as a normal subgroup. Obviously, if $H$ is a normal subgroup $G$, then $N_G(H) = G$.

The \emph{commutator} of elements $g$ and $h$ of $G$ is the element
$$[g,h]=g^{-1}h^{-1}gh\in G.$$
The subgroup $G'=[G,G]$ generated by all commutators of $G$ is called the \emph{commutator subgroup} of the group $G$. The commutator subgroup $G'$ is a normal subgroup of $G$. The commutator subgroup $G'$ is trivial if and only if the group $G$ is commutative.

These definitions, as well as all other definitions, notions and results of group theory and the theory of continuous transformation groups used in the paper without reference, can be found in \cite{Br} and \cite{Kurosh}.


\section{Bi-Invariant Subsets Of A Binary $G$-Space }

\begin{proposition}\label{prop-0}
Any intersection of bi-invariant subsets of a binary $G$-space is a bi-invariant subset.
\end{proposition}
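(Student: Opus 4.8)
The plan is to verify the two inclusions $G(A,A)\subseteq A$ and $A\subseteq G(A,A)$ directly, where $A=\bigcap_{i\in I}A_i$ for a family $\{A_i\}_{i\in I}$ of bi-invariant subsets (the case $A=\emptyset$ being trivial since $G(\emptyset,\emptyset)=\emptyset$).

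First I would prove $G(A,A)\subseteq A$. Take any $g\in G$ and $a_1,a_2\in A$. Since $A\subseteq A_i$ for every $i\in I$, we have $a_1,a_2\in A_i$, hence $g(a_1,a_2)\in g(A_i,A_i)\subseteq G(A_i,A_i)=A_i$, the last equality being the bi-invariance of $A_i$. As this holds for all $i\in I$, we get $g(a_1,a_2)\in\bigcap_{i\in I}A_i=A$, so $G(A,A)\subseteq A$.

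Next I would prove the reverse inclusion $A\subseteq G(A,A)$, which is the only place where a structural axiom is actually used: for $a\in A$, condition \eqref{eq(2)} gives $a=e(a,a)\in G(A,A)$. Combining the two inclusions yields $G(A,A)=A$, i.e. $A$ is bi-invariant, and then it is a binary $G$-subspace by the definition following Definition~\ref{def-orbita}.

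There is essentially no serious obstacle here; the argument is a routine ``closed under the operations plus contains the unit'' computation. The only point worth flagging is that bi-invariance is an equality, not merely the closure condition $G(A,A)\subseteq A$, so one must not forget the appeal to $e(x_1,x_2)=x_2$ for the inclusion $A\subseteq G(A,A)$; the continuity of the binary action plays no role in this statement.
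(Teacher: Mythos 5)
Your proof is correct and follows essentially the same route as the paper's: the inclusion $G(A,A)\subseteq A$ via monotonicity and the bi-invariance of each $A_i$, and the reverse inclusion via $a=e(a,a)$. The only differences are cosmetic — you treat an arbitrary family rather than two sets and make the appeal to the axiom $e(x_1,x_2)=x_2$ explicit, both of which the paper leaves implicit.
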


\begin{proof}
Suppose given bi-invariant subsets $A$ and $B$ of a binary $G$-space $X$. Let us prove that $G(A\cap B)=A\cap B$. Since $A\cap B\subset G(A\cap B)$, it suffices to show that $G(A\cap B)\subset A\cap B$. Indeed, the inclusions $A\cap B \subset A$ and $A\cap B \subset B$ imply
$$G(A\cap B)\subset G(A)=A, \quad G(A\cap B)\subset G(B)=B.$$
Therefore, $G(A\cap B)\subset A\cap B$.
\end{proof}

A union of bi-invariant subsets in a binary $G$-space, in contrast to that in a $G$-space, is not generally bi-invariant (see \cite[Example 3]{Gev2}).

Let $(G, X, \alpha)$ be a binary $G$-space. The $G$-space $(G, X\times X, \widetilde{\alpha})$, on which the action of $G$ is defined by
\begin{equation*}
\widetilde{\alpha}(g, x_1,x_2)=(x_1, \alpha(g,x_1,x_2)),
\end{equation*}
is called the \emph{natural $G$-square} induced by the binary action $\alpha$.

Setting $\widetilde{\alpha}(g, x_1,x_2)=g\cdot (x_1,x_2)$ and $\alpha(g,x_1,x_2)=g(x_1,x_2)$, we rewrite the last formula as
$$g\cdot (x_1,x_2)=(x_1, g(x_1,x_2)).$$

Note that, for any  $g\in G$ and $a,a'\in A$, 
$$g\cdot (a,a')=(a,g(a,a'))\in A\times A$$
if and only if $g(a,a')\in A$. Thus, the following proposition holds.

\begin{proposition}
A subset $A$ of a binary $G$-space $X$ is bi-invariant if and only if the set $A\times A$ is invariant in the natural $G$-square $X\times X$.
\end{proposition}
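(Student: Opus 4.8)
The plan is to unwind both implications directly from the observation recorded immediately before the statement: for $g\in G$ and $a,a'\in A$ one has $g\cdot(a,a')=(a,g(a,a'))$, so this point lies in $A\times A$ if and only if $g(a,a')\in A$. With this equivalence in hand, the proposition is essentially a translation between the two sets of conditions, together with one bookkeeping remark about the identity element.

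First I would treat the forward direction. Assume $A$ is bi-invariant, i.e. $G(A,A)=A$. Then for every $g\in G$ and all $a,a'\in A$ the point $g(a,a')$ belongs to $G(A,A)=A$, so by the observation $g\cdot(a,a')\in A\times A$; thus $G\cdot(A\times A)\subseteq A\times A$. For the reverse inclusion I would invoke the identity axiom \eqref{eq(2)}: since $e(a,a')=a'$ we have $e\cdot(a,a')=(a,a')$, whence $A\times A\subseteq G\cdot(A\times A)$. Therefore $G\cdot(A\times A)=A\times A$, i.e. $A\times A$ is invariant in the natural $G$-square.

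Conversely, assume $A\times A$ is invariant in $X\times X$. Then $g\cdot(a,a')\in A\times A$ for every $g\in G$ and all $a,a'\in A$, and the observation forces $g(a,a')\in A$; hence $G(A,A)\subseteq A$. The opposite inclusion $A\subseteq G(A,A)$ again follows from \eqref{eq(2)}, since $a=e(a,a)\in G(A,A)$ for each $a\in A$. Hence $G(A,A)=A$, so $A$ is bi-invariant.

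I do not expect a genuine obstacle here: the entire content of the argument is carried by the displayed equivalence $g\cdot(a,a')\in A\times A \iff g(a,a')\in A$, and the only point requiring a moment's attention is that both ``invariant'' and ``bi-invariant'' are defined as equalities rather than one-sided containments, so in each direction one must separately record the trivial inclusion coming from the identity element $e$.
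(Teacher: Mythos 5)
Your argument is correct and is exactly the route the paper takes: the paper records the equivalence $g\cdot(a,a')=(a,g(a,a'))\in A\times A \iff g(a,a')\in A$ and immediately concludes, leaving the translation implicit. You have simply written out that translation in full, including the (harmless but worth noting) use of the identity axiom to get the trivial inclusions in both equalities.
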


Let $(X,G,\alpha)$ be a $G$-space. The unary action $\alpha$ generates a binary action $\overline{\alpha}$ of the group $G$ on $X$ by the rule
\begin{equation}\label{eq11}
\overline{\alpha}(g,x_1,x_2)=\alpha(g, x_2), \quad \text{or} \quad g(x_1,x_2)=gx_2,
\end{equation}
for all $g \in G$ and $x_1, x_2\in X$. The action $\overline{\alpha}$ is called the \emph{induced binary action}, and the $G$-space $(X,G, \overline{\alpha})$ is called the \emph{induced binary $G$-space}.

The following simple proposition is valid.

\begin{proposition}\label{prop_1}
Let $(X,G, \alpha)$ be any $G$-space. A set $A\subset X$ is bi-invariant with respect to the induced binary action $\overline{\alpha}$ if and only if it is invariant with respect to the action $\alpha$.
\end{proposition}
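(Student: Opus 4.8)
The plan is to unwind the two definitions and check the biconditional in both directions. Recall that for the induced binary action we have $g(x_1,x_2)=gx_2$ for all $g\in G$ and $x_1,x_2\in X$, so that $g(A,A)=\{g(a_1,a_2)\mid a_1,a_2\in A\}=\{ga_2\mid a_2\in A\}=gA$, independently of the first argument. Hence $G(A,A)=\bigcup_{g\in G}gA=GA$ as sets, where $GA$ is the usual $G$-saturation of $A$ under the unary action $\alpha$.

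With this observation, both implications are immediate. First I would show that if $A$ is invariant with respect to $\alpha$, i.e. $GA=A$, then $G(A,A)=GA=A$, so $A$ is bi-invariant with respect to $\overline{\alpha}$. Conversely, if $A$ is bi-invariant with respect to $\overline{\alpha}$, then $A=G(A,A)=GA$, so $A$ is invariant with respect to $\alpha$. In fact it suffices to note that the condition $G(A,A)=A$ and the condition $GA=A$ are literally the same condition once we use $g(a_1,a_2)=ga_2$, so no case analysis is needed.

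I do not anticipate any real obstacle here; the statement is called "simple" for good reason. The only thing to be slightly careful about is that $A$ be nonempty for the identity $g(A,A)=GA$ to read cleanly (if $A=\emptyset$ both sides are trivially equal anyway), and to make sure the reader sees that the first coordinate in $\overline{\alpha}(g,x_1,x_2)=\alpha(g,x_2)$ plays no role, which is the whole content of the definition of the induced binary action. I would write the proof as a two- or three-line argument: state that $g(A,A)=gA$ for every $g$, deduce $G(A,A)=GA$, and conclude that $G(A,A)=A$ iff $GA=A$.
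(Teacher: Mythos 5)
Your proof is correct, and it is exactly the argument the paper has in mind: the paper states this proposition without proof, calling it ``simple,'' and the intended justification is precisely your observation that $g(a_1,a_2)=ga_2$ makes $G(A,A)=GA$, so the two invariance conditions coincide literally. No gaps; the remark about the empty set is a harmless nicety.
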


In a binary $G$-space $X$, any bi-invariant subset containing a point $x\in X$ contains the whole set $G(x,x)$. There arises the natural question of whether the set $G(x,x)$ is bi-invariant. Example \ref{ex-1} constructed at the end of this section shows that, in the general case, the answer in negative. However, there exists a large class of binary $G$-spaces in which the sets $G(x,x)$ are bi-invariant.

\begin{proposition}
Let $X$ be a distributive binary $G$-space. Then, for any elements $x, x' \in X$, 
\[
G(G(x,x),G(x,x'))=G(x,x').
\]
\end{proposition}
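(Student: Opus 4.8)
The plan is to prove the two inclusions $G(x,x')\subseteq G(G(x,x),G(x,x'))$ and $G(G(x,x),G(x,x'))\subseteq G(x,x')$ separately. The first is immediate: by \eqref{eq(2)} we have $x=e(x,x)\in G(x,x)$, so for any $h\in G$ the element $h(x,x')$ can be written as $e\bigl(x,h(x,x')\bigr)$ with $x\in G(x,x)$ and $h(x,x')\in G(x,x')$; hence $h(x,x')\in G(G(x,x),G(x,x'))$, and since $h$ was arbitrary, $G(x,x')\subseteq G(G(x,x),G(x,x'))$.

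For the reverse — and main — inclusion, I would take an arbitrary element $k\bigl(g(x,x),h(x,x')\bigr)$ of $G(G(x,x),G(x,x'))$ and show it lies in $G(x,x')$ by reshaping it with the axioms \eqref{eq(1)}, \eqref{eq(2)} and the distributivity law \eqref{eq1-1}. The key observation is that \eqref{eq(1)} lets one rewrite the ``diagonal'' entry: since $h\cdot(h^{-1}g)=g$, axiom \eqref{eq(1)} gives $g(x,x)=h\bigl(x,(h^{-1}g)(x,x)\bigr)$. Thus both arguments of $k$ acquire the common shape $h(x,\,\cdot\,)$, and \eqref{eq1-1} applies:
\[
k\bigl(g(x,x),h(x,x')\bigr)=k\Bigl(h\bigl(x,(h^{-1}g)(x,x)\bigr),\,h(x,x')\Bigr)=h\Bigl(x,\,k\bigl((h^{-1}g)(x,x),x'\bigr)\Bigr).
\]

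It then remains to show that $k\bigl(p(x,x),x'\bigr)\in G(x,x')$, where $p=h^{-1}g$. Here I would use the same trick on the second slot: by \eqref{eq(1)} and \eqref{eq(2)}, $x'=e(x,x')=(p\cdot p^{-1})(x,x')=p\bigl(x,p^{-1}(x,x')\bigr)$, so $k\bigl(p(x,x),x'\bigr)=k\Bigl(p(x,x),\,p\bigl(x,p^{-1}(x,x')\bigr)\Bigr)$, and \eqref{eq1-1} turns this into $p\bigl(x,k(x,p^{-1}(x,x'))\bigr)$. Two applications of \eqref{eq(1)} now collapse the right-hand side to $p\bigl(x,(kp^{-1})(x,x')\bigr)=(pkp^{-1})(x,x')$. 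Substituting back and using \eqref{eq(1)} once more yields
\[
k\bigl(g(x,x),h(x,x')\bigr)=h\bigl(x,(pkp^{-1})(x,x')\bigr)=(h\,p\,k\,p^{-1})(x,x')=(g\,k\,g^{-1}h)(x,x')\in G(x,x'),
\]
since $p=h^{-1}g$. Together with the first inclusion this gives the claimed equality.

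The part I expect to be the crux is not the computation but spotting the right normalizations — writing $g(x,x)$ as $h\bigl(x,(h^{-1}g)(x,x)\bigr)$ and $x'$ as $p\bigl(x,p^{-1}(x,x')\bigr)$ — which are exactly what is needed to bring each expression into a form $\ell(x,\,\cdot\,),\ \ell(x,\,\cdot\,)$ to which \eqref{eq1-1} can be applied. Once these substitutions are in place, everything reduces to routine bookkeeping with \eqref{eq(1)} and \eqref{eq(2)}.
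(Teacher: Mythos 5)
Your proof is correct and rests on the same core device as the paper's: use axiom \eqref{eq(1)} to force both arguments into the common shape $\ell(x,\,\cdot\,)$, apply distributivity \eqref{eq1-1}, and collapse with \eqref{eq(1)}, arriving at the same closed form $(gkg^{-1}h)(x,x')$ (the paper writes $g(h(x,x),k(x,x'))=hgh^{-1}k(x,x')$, which is the same identity up to relabeling). The only difference is that the paper normalizes the second slot to match the first, so a single application of \eqref{eq1-1} suffices, whereas your choice to normalize the first slot forces a second round of the same trick; this is a harmless detour, and you also supply the easy reverse inclusion that the paper leaves implicit.
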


\begin{proof}
Indeed, in view of the distributivity of the binary action, we have
\begin{multline*}
g(h(x,x),k(x,x'))=g(h(x,x),h(x, h^{-1}k(x,x')))= \\
=h(x, g(x,h^{-1}k(x,x')))=h(x, g h^{-1}k(x,x'))= \\
=hg h^{-1}k(x, x') \in G(x,x') 
\end{multline*}
for any $g,h,k \in G$.
\end{proof}

A direct consequence of this proposition is the following theorem.

\begin{theorem}[\cite{Gev2}]\label{th-9}
Let $X$ be a distributive binary $G$-space. Then the set $G(x,x)$ is bi-invariant for any $x\in X$.
\end{theorem}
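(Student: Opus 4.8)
The plan is to read the statement off the preceding proposition by specializing $x'$ to $x$. Fix a point $x \in X$ and put $A = G(x,x)$. Bi-invariance of $A$ means precisely that $G(A,A) = A$, that is,
\[
G\bigl(G(x,x),\,G(x,x)\bigr) = G(x,x).
\]
Applying the preceding proposition with $x' = x$ yields exactly this equality, and hence $A = G(x,x)$ is bi-invariant.

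It is worth isolating which inclusion carries the content. The inclusion $A \subseteq G(A,A)$ is automatic and uses only axiom~\eqref{eq(2)}: for any $a \in A$ we have $a = e(a,a) \in G(A,A)$, since $e(a,a) = a$. Thus the substantive assertion is $G(A,A) \subseteq A$, i.e.\ $g(h(x,x), k(x,x)) \in G(x,x)$ for all $g,h,k \in G$ --- and this is precisely what distributivity, combined with identity~\eqref{eq(1)}, delivers in the proof of the preceding proposition, by rewriting $g(h(x,x), k(x,x))$ as $h g h^{-1} k(x,x) \in G(x,x)$.

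There is no real obstacle: all the work sits in the preceding proposition, which we are free to invoke. I would only add the remark anticipated in the introduction: once $G(x,x)$ is known to be bi-invariant, it must coincide with the orbit $[x]$, since $G(x,x) \subseteq [x]$ always holds and $[x]$ is by definition the minimal bi-invariant set containing $x$.
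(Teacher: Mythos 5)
Your proof is correct and follows exactly the paper's route: the paper also obtains this theorem as "a direct consequence" of the preceding proposition by specializing $x'=x$, so that $G(G(x,x),G(x,x))=G(x,x)$ is precisely the bi-invariance of $G(x,x)$. Your added observations (the trivial inclusion via $a=e(a,a)$ and the identification of $G(x,x)$ with the orbit $[x]$) are accurate and match remarks the paper makes elsewhere.
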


There arises the natural question of whether the converse of this theorem is true. This problem was posed in \cite{Gev2}.

\begin{problem}{\cite{Gev2}}\label{prob-1}
Let $X$ be a binary $G$-space such that the set $G(x,x)\subset X$ is bi-invariant for any $x\in X$. Is it true that $X$ is a distributive binary $G$-space?
\end{problem}

The rest of this section is devoted to the solution of this problem.

\begin{example}\label{ex-(H,G)}
Let $G$ be a topological group, and let $H$ be its subgroup. The continuous mapping  $\alpha : H\times G^2 \to G$ defined by
\begin{equation}\label{eq-action}
\alpha (h,x_1,x_2) = x_1^{-1}hx_1x_2, \quad \text{or} \quad h(x_1,x_2) = x_1^{-1}hx_1x_2,
\end{equation}
for any $h\in H$ and $x_1, x_2 \in G$ determines a binary action of the subgroup $H$ on the group $G$. Indeed, we have $e(x_1,x_2)=x_2$ and
\begin{multline*}
hh'(x_1,x_2) = x_1^{-1}hh'x_1x_2 = x_1^{-1}hx_1x_1^{-1}h'x_1x_2 = \\
=h(x_1, x_1^{-1}h'x_1x_2) = h(x_1, h'(x_1,x_2)).
\end{multline*}

We denote the binary $H$-space thus obtained by $(H,G,\alpha)$.
\end{example}

\begin{proposition}\label{prop-1}
Suppose given a binary $H$-space $(H,G,\alpha)$. A subset $H(x,x)\subset G$, $x\in G$, is bi-invariant if and only if $x^{-1}Hx\subset N_G(H)$, where $N_G(H)$ is the normalizer of the subgroup $H$ in the group $G$.
\end{proposition}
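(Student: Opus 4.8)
The plan is to first make the set $H(x,x)$ completely explicit. From the definition of $\alpha$ in Example~\ref{ex-(H,G)} one has
$$H(x,x)=\{h(x,x)\,:\,h\in H\}=\{x^{-1}hx\cdot x\,:\,h\in H\}=Kx,$$
where $K:=x^{-1}Hx$ is a subgroup of $G$ conjugate to $H$. Put $A:=H(x,x)=Kx$; since $e\in K$ we have $x\in A$. Because $e(a_1,a_2)=a_2$, the inclusion $A\subseteq H(A,A)$ always holds, so $A$ is bi-invariant if and only if $H(A,A)\subseteq A$, and it is this single inclusion that I would analyse.

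Next I would unwind $H(A,A)\subseteq A$ using the parametrization $a_1=k_1x$, $a_2=k_2x$ with $k_1,k_2\in K$, which exhausts $A$. The action formula gives
$$h(a_1,a_2)=a_1^{-1}ha_1a_2=\bigl((k_1x)^{-1}h(k_1x)\bigr)\,k_2x.$$
Since $k_2\in K$ and $K$ is a group, $Kk_2^{-1}=K$, so $h(a_1,a_2)\in Kx$ if and only if $(k_1x)^{-1}h(k_1x)\in K$. Substituting $K=x^{-1}Hx$ and cancelling the outer $x^{\pm1}$ turns this into $k_1^{-1}hk_1\in H$. Hence $H(A,A)\subseteq A$ holds precisely when $k^{-1}Hk\subseteq H$ for every $k\in K$, i.e. when every element of $K$ conjugates $H$ \emph{into} itself.

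Finally I would upgrade this one-sided containment to membership in the normalizer, and this is the step I expect to be the only non-routine point: for a single $g$ the inclusion $g^{-1}Hg\subseteq H$ need not be an equality, but here $K$ is a subgroup, hence closed under inversion. So if $k^{-1}Hk\subseteq H$ for all $k\in K$, applying this to $k^{-1}\in K$ yields $kHk^{-1}\subseteq H$, that is $H\subseteq k^{-1}Hk$, whence $k^{-1}Hk=H$ and $k\in N_G(H)$; the converse implication is immediate. Therefore $H(A,A)\subseteq A$ if and only if $K=x^{-1}Hx\subseteq N_G(H)$, which is exactly the assertion of the proposition.
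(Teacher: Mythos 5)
Your proof is correct and follows essentially the same route as the paper: both unwind the bi-invariance condition via the explicit formula $h(x_1,x_2)=x_1^{-1}hx_1x_2$ and reduce it to the statement that every element of $x^{-1}Hx$ conjugates $H$ into itself. In fact you are slightly more careful than the paper at the final step, where you use that $K=x^{-1}Hx$ is closed under inversion to upgrade the one-sided containment $k^{-1}Hk\subseteq H$ to genuine membership in $N_G(H)$ --- a point the paper's proof passes over silently.
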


\begin{proof}
The bi-invariance of $H(x,x)$, $x\in G$, means that, for any  $h,h_1$, $h_2\in H$, there exists an element $\widetilde{h}\in H$ such that
\[
h(h_1(x,x), h_2(x,x))=\widetilde{h}(x,x).
\]

By virtue of \eqref{eq-action}, this implies
$$
h(x^{-1}h_1xx, x^{-1}h_2xx)=x^{-1}\widetilde{h}xx, 
$$
$$
x^{-1}x^{-1}h_1^{-1}xhx^{-1}h_1xx x^{-1}h_2xx=x^{-1}\widetilde{h}xx,
$$
$$
x^{-1}h_1^{-1}xhx^{-1}h_1xh_2=\widetilde{h},
$$
$$
(x^{-1}h_1x)^{-1}h(x^{-1}h_1x)=\widetilde{h}h_2^{-1}\in H,
$$
which is equivalent to $x^{-1}Hx\subset N_G(H)$.
\end{proof}

\begin{corollary}
If $H$ is a normal subgroup of a group $G$, then $H(x,x)$ is bi-invariant for any $x\in G$.
\end{corollary}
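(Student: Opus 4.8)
The plan is to deduce this directly from Proposition \ref{prop-1}, applied to the binary $H$-space $(H,G,\alpha)$ constructed in Example \ref{ex-(H,G)}. The only ingredient needed besides that proposition is the standard fact, recalled in Section 2, that if $H$ is a normal subgroup of $G$ then $N_G(H)=G$; indeed normality means $g^{-1}Hg=H$ for every $g\in G$, so every element of $G$ normalizes $H$, and the normalizer, being the largest subgroup in which $H$ is normal, must be all of $G$.

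First I would fix an arbitrary element $x\in G$. Then $x^{-1}Hx$ is a subset of $G$ (in fact, by normality, $x^{-1}Hx=H$), and since $N_G(H)=G$ the inclusion $x^{-1}Hx\subset N_G(H)$ holds trivially. Proposition \ref{prop-1} now applies verbatim and gives that $H(x,x)\subset G$ is bi-invariant. As $x$ was arbitrary, the corollary follows.

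There is essentially no obstacle to this argument: the substantive content is entirely contained in Proposition \ref{prop-1}, and the corollary merely records the special case in which the normalizer condition $x^{-1}Hx\subset N_G(H)$ becomes vacuous. The single point to verify is the elementary implication ``$H$ normal in $G$ $\Rightarrow$ $N_G(H)=G$'', which is immediate from the definitions of normal subgroup and normalizer given in Section 2.
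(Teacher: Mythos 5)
Your proof is correct and follows exactly the route the paper intends: the corollary is stated as an immediate consequence of Proposition \ref{prop-1}, using that normality of $H$ gives $N_G(H)=G$, so the condition $x^{-1}Hx\subset N_G(H)$ holds for every $x\in G$.
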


 \begin{theorem}
 Let $(X,G, \alpha)$ be a $G$-space. The induced binary $G$-space $(X,G, \overline{\alpha})$ is distributive if and only if the commutator subgroup $G'$ of the group $G$ is a subgroup of the kernel $\rm{Ker}\,\alpha$ of the action  $\alpha$.
 \end{theorem}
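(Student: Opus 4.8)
The plan is to substitute the explicit formula \eqref{eq11} for the induced binary action into the distributivity identity \eqref{eq1-1} and simplify; the identity will collapse to a statement purely about the group, with essentially no dependence on the points of $X$. First I would compute the left-hand side of \eqref{eq1-1} for $\overline{\alpha}$. Since $g(x_1,x_2)=gx_2$ depends only on $g$ and the second argument, we get $h(x,x')=hx'$, $h(x,x'')=hx''$, and therefore
\[
g\bigl(h(x,x'),\,h(x,x'')\bigr)=g\bigl(hx',\,hx''\bigr)=g(hx'')=(gh)x''.
\]
Similarly the right-hand side becomes
\[
h\bigl(x,\,g(x',x'')\bigr)=h(x,\,gx'')=h(gx'')=(hg)x''.
\]
Hence $\overline{\alpha}$ is distributive if and only if $(gh)x=(hg)x$ for all $x\in X$ and all $g,h\in G$; the variables $x$ and $x'$ of \eqref{eq1-1} have disappeared, so nothing of the ``three-point'' character survives.

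Next I would translate this equality of unary actions into the commutator condition. For fixed $g,h\in G$, the identity $(gh)x=(hg)x$ for all $x$ is, after applying $(hg)^{-1}$ to both sides and using the axioms of a $G$-action, equivalent to $\bigl((hg)^{-1}(gh)\bigr)x=x$ for all $x$, i.e.\ to $[g,h]=g^{-1}h^{-1}gh\in\operatorname{Ker}\alpha$. So distributivity of $\overline{\alpha}$ is equivalent to the assertion that every commutator of $G$ lies in $\operatorname{Ker}\alpha$.

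Finally, since $\operatorname{Ker}\alpha$ is a subgroup of $G$ and, by definition, $G'=[G,G]$ is the subgroup \emph{generated} by all commutators, the condition ``$[g,h]\in\operatorname{Ker}\alpha$ for all $g,h\in G$'' holds if and only if $G'\subseteq\operatorname{Ker}\alpha$. Both implications of the theorem then follow at once. There is no substantial obstacle here: the argument is a direct computation, and the only points requiring care are that the induced action acts on the second coordinate (so the left-hand side of \eqref{eq1-1} reduces to $(gh)x''$ rather than to something involving $x'$) and that the paper's commutator convention $[g,h]=g^{-1}h^{-1}gh$ is exactly the element $(hg)^{-1}(gh)$ produced by the reduction.
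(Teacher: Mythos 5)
Your proposal is correct and follows essentially the same route as the paper: substitute the formula $g(x_1,x_2)=gx_2$ into the distributivity identity, reduce both sides to $(gh)x''$ and $(hg)x''$, and conclude that distributivity is equivalent to all commutators lying in $\operatorname{Ker}\alpha$, hence to $G'\subseteq\operatorname{Ker}\alpha$. The only (harmless) difference is that you run the computation as a single biconditional, while the paper treats the two implications separately, specializing to $x'=x''=x$ for the forward direction.
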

 
\begin{proof}
Suppose that the induced binary $G$-space $X$ is distributive. Then, for any $g,h\in G$ and $x\in X$, we have
\begin{multline*}
g(h(x,x), h(x,x)) = h(x, g(x,x)) \  \Longrightarrow \\
\Longrightarrow (gh)x = (hg)x \  \Longrightarrow \ (g^{-1}h^{-1}gh) x =x,
\end{multline*}
i.e., $g^{-1}h^{-1}gh = [g,h]\in \rm{Ker}\, \alpha $.

Now suppose that $g^{-1}h^{-1}gh = [g,h]\in \rm{Ker}\,\alpha $ for any $g,h\in G$. Let us verify that the induced binary
action is distributive:
\[
g(h(x,x'), h(x,x'')) =  (gh)x'' = (hg)x'' = h(x, g(x',x'')).
\]
\end{proof}

\begin{corollary}\label{cor-1}
If $(X,G, \alpha)$ is an effective $G$-space, then the induced binary $G$-space $(X,G, \overline{\alpha})$ is distributive if and only if $G$ is an Abelian group.
\end{corollary}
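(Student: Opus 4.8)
The plan is to deduce this directly from the theorem immediately preceding it, which characterizes distributivity of the induced binary $G$-space $(X,G,\overline{\alpha})$ by the inclusion $G'\subset\operatorname{Ker}\alpha$. So the first step is simply to observe that effectiveness of the $G$-space $(X,G,\alpha)$ means, by definition, that $\operatorname{Ker}\alpha=\{e\}$.

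Next I would substitute this into the conclusion of the theorem: $(X,G,\overline{\alpha})$ is distributive if and only if $G'\subset\{e\}$. Since $\{e\}$ is the smallest subgroup of $G$, this inclusion is equivalent to $G'=\{e\}$, i.e. to the triviality of the commutator subgroup. Finally I would invoke the standard fact, recalled in Section 2 of the paper, that $G'$ is trivial precisely when $G$ is commutative, which gives the desired equivalence with $G$ being Abelian.

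There is essentially no obstacle here: the corollary is a routine specialization of the theorem, and the only point worth stating explicitly is that $G'\subset\{e\}$ collapses to $G'=\{e\}$. A one-line argument suffices, and I would present it as such rather than reproving the distributivity computation.
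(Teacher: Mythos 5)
Your argument is correct and is exactly the intended derivation: the corollary follows from the preceding theorem by noting that effectiveness means $\operatorname{Ker}\,\alpha=\{e\}$, so $G'\subset\operatorname{Ker}\,\alpha$ collapses to $G'=\{e\}$, which is equivalent to $G$ being Abelian. The paper treats this as an immediate consequence and gives no separate proof, so your one-line specialization is precisely the same route.
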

 
\emph{Solution of Problem \ref{prob-1}.} Let $G$ be any non-Abelian group, and let $X$ be an effective $G$-space. Consider the induced binary $G$-space $(X,G, \overline{\alpha})$. According to Proposition \ref{prop_1}, the sets $G(x,x)\subset X$ are bi-invariant for all $x\in X$. However, the binary $G$-space $(X,G,\overline{\alpha})$ is not distributive by virtue of Corollary \ref{cor-1}. Thus, Problem \ref{prob-1} has a negative solution.

Now we construct the promised example of a non-bi-invariant set of the form $G(x,x)$.

\begin{example}\label{ex-1}
Let $G=GL(2,\mathbf{R})$. Consider the set
$$H=\left\{ 
\begin{bmatrix}
1 & h\\
0 & 1
\end{bmatrix}, \  h\in \mathbf{R} \right\}.
$$
This is a subgroup of the group $GL(2,\mathbf{R})$ and, therefore, formula \eqref{eq-action} defines a binary action of $H$ on $GL(2,\mathbf{R})$.

Let us show that, for 
$
x=\begin{bmatrix}
0 & 1\\
1 & 0
\end{bmatrix}
$,
the subset $H(x,x)$ of $GL(2,\mathbf{R})$ is not bi-invariant. Consider the matrix 
$h=
\begin{bmatrix}
1 & 1\\
0 & 1
\end{bmatrix}
\in H.
$
Note that the matrix
$$x^{-1}hx=
\begin{bmatrix}
0 & 1\\
1 & 0
\end{bmatrix} 
\begin{bmatrix}
1 & 1\\
0 & 1
\end{bmatrix} 
\begin{bmatrix}
0 & 1\\
1 & 0
\end{bmatrix} = 
\begin{bmatrix}
1 & 0\\
1 & 1
\end{bmatrix}
$$
does not belong to the normalizer $N_G(H)$. Indeed, we have
$$(x^{-1}hx)^{-1}h(x^{-1}hx)= 
\begin{bmatrix}
1 & 0\\
-1 & 1
\end{bmatrix} 
\begin{bmatrix}
1 & 1\\
0 & 1
\end{bmatrix} 
\begin{bmatrix}
1 & 0\\
1 & 1
\end{bmatrix} = 
\begin{bmatrix}
2 & 1\\
-1 & 0
\end{bmatrix}
\notin H. \vspace{6pt}
$$

Therefore, by virtue of Proposition \ref{prop-1}, H $H(x,x)$ is not a bi-invariant subset of the group $GL(2,\mathbf{R})$.
\end{example}


\section{Orbits Of A Binary $G$-Space}

In binary $G$-spaces, unlike in $G$-spaces, orbits may intersect.

\begin{example}
Let $GL(2,\mathbf{R})$ be the topological group of nonsingular square matrices of order 2. The
set $H=\{e,h\}$, where 
$$
e=
\begin{bmatrix}
1 &  0  \\
0 & 1 
\end{bmatrix},  \quad 
h=\begin{bmatrix}
0 &  1 \\
1 & 0
\end{bmatrix},
$$
is a subgroup of $GL(2,\mathbf{R})$, because $h^2=e$. Therefore, formula \eqref{eq-action} defines a binary action of $H$ on $GL(2,\mathbf{R})$ (see Example \ref{ex-(H,G)}). Obviously, the orbit of the element $h$ coincides with the subgroup $H$: $[h]=\{e,h\}$. Now consider the matrix 
$x=
\begin{bmatrix}
0 &  -1  \\
1 & -1 
\end{bmatrix}\notin [h]
$. 
Let us prove that the orbit $[x]$ intersects the orbit $[h]$. Indeed,
$$
h(x,x)=x^{-1}hxx=\begin{bmatrix}
-1 &  1  \\
-1 & 0 
\end{bmatrix}
\begin{bmatrix}
0 &  1 \\
1 & 0
\end{bmatrix}
\begin{bmatrix}
0 &  -1  \\
1 & -1 
\end{bmatrix}
\begin{bmatrix}
0 &  -1  \\
1 & -1 
\end{bmatrix}=
\begin{bmatrix}
0 &  1 \\
1 & 0
\end{bmatrix}=
h. 
$$

This example shows that the orbit $[x]$ of a point $x$ in a binary $G$-space $X$ does not generally coincide with the orbits of all points of $[x]$. It may contain smaller orbits. However, there exists a large class of binary $G$-spaces with disjoint orbits.
\end{example}

\begin{proposition}
Two orbits of a distributive binary $G$-space $X$ either are disjoint or coincide.
\end{proposition}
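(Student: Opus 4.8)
The plan is to reduce the whole dichotomy to a single assertion: \emph{if a point $z$ lies in an orbit $[x]$, then $[z]=[x]$.} Granting this, suppose the orbits $[x]$ and $[y]$ are not disjoint, pick $z\in[x]\cap[y]$; then $[x]=[z]$ and $[y]=[z]$, hence $[x]=[y]$, which is exactly what is claimed.

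So I would fix a distributive binary $G$-space $X$, a point $x\in X$, and a point $z\in[x]$, and prove $[z]=[x]$. By Theorem~\ref{th-9} the orbit of $x$ is exactly $G(x,x)$ (it is bi-invariant, it contains $x=e(x,x)$, and by the remark after Definition~\ref{def-orbita} it is contained in every bi-invariant set containing $x$), so $z\in[x]$ means $z=g(x,x)$ for some $g\in G$. The inclusion $[z]\subseteq[x]$ is then immediate: $z$ lies in the bi-invariant set $[x]$, and $[z]$ is by Definition~\ref{def-orbita} the smallest bi-invariant set containing $z$. For the reverse inclusion I would show $x\in[z]$. Since $[z]=G(z,z)$, again by Theorem~\ref{th-9}, it suffices to write $x=k(z,z)$ for some $k\in G$, and the natural candidate is $k=g^{-1}$:
\begin{multline*}
g^{-1}(z,z)=g^{-1}\bigl(g(x,x),g(x,x)\bigr)= \\
=g\bigl(x,g^{-1}(x,x)\bigr)=(gg^{-1})(x,x)=e(x,x)=x,
\end{multline*}
where the first equality substitutes $z=g(x,x)$, the second is the distributivity identity \eqref{eq1-1} with $x'=x''=x$ and with the roles of $g,h$ there played by $g^{-1},g$, the third is axiom \eqref{eq(1)} with $h=g^{-1}$, and the last two use $gg^{-1}=e$ and axiom \eqref{eq(2)}. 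Thus $x=g^{-1}(z,z)\in G(z,z)=[z]$; since $[z]$ is bi-invariant and $[x]$ is the smallest bi-invariant set containing $x$, we get $[x]\subseteq[z]$, hence $[z]=[x]$.

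The only delicate point — the ``main obstacle'' — is that displayed chain producing $x=g^{-1}(z,z)$: one must apply distributivity in the correct direction (outer element $g^{-1}$, inner element $g$, all three point-arguments equal to $x$) and then recognise $g(x,g^{-1}(x,x))$ as an instance of the binary-action axiom \eqref{eq(1)} collapsing via \eqref{eq(2)} to $x$. Everything else is bookkeeping with the definition of the orbit (and Proposition~\ref{prop-0}, which guarantees that orbits exist) together with Theorem~\ref{th-9}; note that, beyond the identity $[x]=G(x,x)$, distributivity is invoked exactly once, in that computation.
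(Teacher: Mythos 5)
Your proof is correct and follows essentially the same route as the paper: both reduce the dichotomy to showing that every point $z$ of an orbit $[x]$ satisfies $[z]=[x]$, identify orbits with the sets $G(x,x)$ via Theorem~\ref{th-9}, and use a single application of distributivity combined with axioms \eqref{eq(1)} and \eqref{eq(2)}. The only (harmless) difference is that you exhibit just the single element $x=g^{-1}(z,z)\in[z]$ and then invoke minimality of $[x]$, whereas the paper writes every element $g(x,x)$ of $[x]$ directly as $g_0^{-1}g\bigl(g_0(x,x),g_0(x,x)\bigr)$.
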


\begin{proof}
Proof. Let $x$ be any element of a distributive binary $G$-space $X$. By Theorem \ref{th-9}, the set   $G(x,x)$, $x\in X$,
is bi-invariant; therefore, it is the orbit of the point $x$: $[x] = G(x, x)$.

First, we prove that $[x]$ is the orbit of each of its points, i.e., contains no proper bi-invariant subsets. Take any element $g_0(x,x)\in [x]$. Let us show that $[g_0(x,x)]=[x]$. Obviously, $[g_0(x,x)]\subset [x]$. Therefore, it suffices to check that $[x] \subset [g_0(x,x)]$, i.e., any element $g(x,x)$ of the orbit $[x]$ is also an element of the orbit $[g_0(x,x)]$: 
$$g(x,x) = g'(g_0(x,x), g_0(x,x))$$ 
for some $g'\in G$. Indeed, taking into account the distributivity of the binary action, we obtain
\begin{multline*} 
g(x,x)= g_0g_0^{-1}g(x,x) = g_0(x, g_0^{-1}g(x,x))  =  \\
=g_0^{-1}g(g_0(x,x), g_0(x,x)) = g'(g_0(x,x), g_0(x,x)),
\end{multline*}
where $g'=g_0^{-1}g$.

Now let $[x]$ and $[x']$ be any orbits of a distributive binary $G$-space $X$. Suppose that these orbits intersect, i.e., there exists a point $\widetilde{x}\in [x]\cap [x']$. According to what was proved above, we have $[x]=[\widetilde{x}]$ and $[x']=[\widetilde{x}]$. Therefore, the orbits $[x]$ and $[x']$ coincide.
\end{proof}

\begin{example}
Let $G$ be a topological group, and let $H$ be a subgroup of $G$. The continuous mapping $\alpha : H\times G^2\to G$ defined by
\begin{equation}\label{eq-H on G}
\alpha(h,x,y) = xhx^{-1}y \quad \text{or} \quad h(x,y) = xhx^{-1}y,
\end{equation}
$h\in H$ and $x,y\in G$ are any elements, is a binary action of $H$ on $G$. Indeed,
$$h
\widetilde{h}(x,y)=xh\widetilde{h}x^{-1}y=xhx^{-1}x\widetilde{h}x^{-1}y=h(x, x\widetilde{h}x^{-1}y)=h(x, \widetilde{h}(x,y)),
$$
$$
e(x,y)=xex^{-1}y=y
$$
for all $h, \widetilde{h}\in H$ and $x,y \in G$.
In contrast to \eqref{eq-action}, the binary action \eqref{eq-H on G} is distributive:
\begin{multline*}
h(\widetilde{h}(x,y), \widetilde{h}(x,z))=h(x\widetilde{h}x^{-1}y, x\widetilde{h}x^{-1}z)= \\
=x\widetilde{h}x^{-1}yhy^{-1}x\widetilde{h}^{-1}x^{-1}x\widetilde{h}x^{-1}z=x\widetilde{h}x^{-1}yhy^{-1}z= \\
=\widetilde{h}(x,yhy^{-1}z)=\widetilde{h}(x,h(y, z)).
\end{multline*}
for all $h,\widetilde{h}\in H$ and $x,y,z\in G$.

Thus, the orbits of the binary $H$-space under consideration are the set of the form $H(x,x)$. Since $H(x,x) = xHx^{-1}x = xH$, it follows that the orbits are the left cosets of $H$ in $G$.
\end{example}

To describe the orbit of a point $x$ of any binary $G$-space $X$, we recursively define sets $G^{n}(x,x)$, $n\in N$,
by
\[
G^1(x,x) = G(x,x),  \quad \ldots , \quad G^n(x,x) = G(G^{n-1}(x,x), G^{n-1}(x,x)).
\]
It is easy to see that

(1) $x\in G^1(x,x)\subset G^2(x,x)\subset \ldots \subset G^n(x,x) \subset ...$;

(2) if $G^n(x,x)$ is bi-invariant for some $n\in N$, then $G^k(x,x)=G^n(x,x)$ for all positive integers $k>n$ and, therefore, $\bigcup\limits_{i=1}^{\infty} G^i(x,x) = G^n(x,x)$;

(3) the set   $\bigcup\limits_{i=1}^{\infty} G^i(x,x)$ is bi-invariant in the binary $G$-space $X$.

These assertions directly imply the following proposition.

\begin{proposition}
For any element $x$ of a binary $G$-space $X$,
\[
[x]= \bigcup\limits_{i=1}^{\infty} G^i(x,x),
\] 
where $[x]$ is the orbit of $x$.
\end{proposition}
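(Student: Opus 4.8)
The plan is to establish the three bulleted assertions (1)--(3), since the proposition follows from them by the standard minimality argument: $[x]$ is the smallest bi-invariant set containing $x$, so on the one hand $\bigcup_i G^i(x,x)$ is bi-invariant by (3) and contains $x$ by (1), hence $[x]\subset\bigcup_i G^i(x,x)$; on the other hand, any bi-invariant set containing $x$ contains $G(x,x)=G^1(x,x)$, hence by induction contains every $G^n(x,x)$ (applying $G(\cdot,\cdot)$ and using bi-invariance), hence contains the union, so $\bigcup_i G^i(x,x)\subset[x]$. Thus the real content is verifying (1)--(3).

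For (1), the inclusion $x\in G^1(x,x)$ is immediate from $e(x,x)=x$ (axiom \eqref{eq(2)}). For the chain $G^{n-1}(x,x)\subset G^n(x,x)$ I would argue inductively: given any set $A$ with $A\subset G(A,A)$, one has $G(A,A)\subset G(G(A,A),G(A,A))$ because for $y\in G(A,A)$ we can write $y=e(y,y)\in G(G(A,A),G(A,A))$; applying this with $A=G^{n-1}(x,x)$ (whose self-containment is the inductive hypothesis, the base case being $x\in G^1(x,x)$) gives the desired inclusion. For (2), if $G^n(x,x)$ is bi-invariant then $G^{n+1}(x,x)=G(G^n(x,x),G^n(x,x))=G^n(x,x)$ by definition of bi-invariance, and then all higher terms are equal by a trivial induction, so the union collapses to $G^n(x,x)$.

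The main work is (3): showing $B:=\bigcup_{i=1}^\infty G^i(x,x)$ satisfies $G(B,B)=B$. The inclusion $B\subset G(B,B)$ follows from (1) (each element is fixed by $e$, or simply $B=G^1(x,x)\cup\cdots\subset G(B,B)$ since $B\supset G^1(x,x)$ and the chain is increasing). For $G(B,B)\subset B$: take $g\in G$ and $b_1,b_2\in B$; by (1) the sets $G^i(x,x)$ are nested, so there is a single $n$ with $b_1,b_2\in G^n(x,x)$, whence $g(b_1,b_2)\in G(G^n(x,x),G^n(x,x))=G^{n+1}(x,x)\subset B$. The one point needing a word of care is the use of nestedness to put $b_1$ and $b_2$ at the same level $n$ — that is exactly what (1) provides. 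No topological considerations enter: bi-invariance is a purely algebraic condition on the set, so continuity of $\alpha$ plays no role here, and I would not belabor it.

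I do not anticipate a genuine obstacle; the proposition is essentially a formal consequence of the recursive definition together with the minimality characterization of the orbit in Definition \ref{def-orbita}. If anything, the subtle point is purely expository: one should be explicit that ``minimal bi-invariant set containing $x$'' is well-defined (an arbitrary intersection of bi-invariant sets is bi-invariant by Proposition \ref{prop-0}, and $X$ itself is bi-invariant, so the intersection of all bi-invariant sets containing $x$ exists and is the orbit), and that this characterization is what forces the two-sided inclusion above.
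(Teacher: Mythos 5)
Your proof is correct and follows essentially the same route as the paper: the paper simply lists assertions (1)--(3) as "easy to see" and states that they directly imply the proposition, while you supply the verifications of (1)--(3) (all correct, resting on $y=e(y,y)$ and the nestedness of the chain) and the standard minimality argument the paper leaves implicit. No gaps.
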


\begin{definition}
The orbit of a point $x$ of a binary $G$-space $X$ is said to be \emph{finitely generated} if $[x]=G^n(x,x)$ for some $n\in N$. Otherwise, the orbit is said to be \emph{infinitely generated}.
\end{definition}

In the examples of binary $G$-spaces constructed previously, including those of distributive binary $G$-spaces, all orbits are finitely generated. However, the following theorem is valid.

\begin{theorem}
There exist binary G-spaces with infinitely generated orbits.
\end{theorem}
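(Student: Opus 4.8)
The plan is to exhibit an explicit example. I would let $G=\mathbf{Z}$, with the discrete topology, act on the discrete space $X=\{0,1,2,\ldots\}$ as follows: for each $a\in X$ let $T_a\colon X\to X$ be the transposition interchanging $a$ and $a+1$ and fixing every other point, and put
$$
\alpha(n,x_1,x_2)=T_{x_1}^{n}(x_2),\qquad n\in\mathbf{Z},\ x_1,x_2\in X .
$$
Since $T_{x_1}$ is a bijection of $X$, the map $n\mapsto T_{x_1}^{n}$ is a $\mathbf{Z}$-action on $X$ for every fixed $x_1$, so conditions \eqref{eq(1)} and \eqref{eq(2)} hold; continuity is automatic, so $(\mathbf{Z},X,\alpha)$ is a binary $\mathbf{Z}$-space. (In the same way, any rule assigning a $G$-action on $X$ to each point of $X$, continuous in the parameter, is a binary action; here one transposition is attached to each parameter.)

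The core of the proof is to compute the sets $G^n(0,0)$ and show they increase strictly. I would prove by induction that $G^n(0,0)=\{0,1,\ldots,n\}$. Indeed $G^1(0,0)=\{T_0^{k}(0):k\in\mathbf{Z}\}=\{0,1\}$; and if $A:=G^{n-1}(0,0)=\{0,1,\ldots,n-1\}$, then
$$
G^n(0,0)=G(A,A)=\bigcup_{a\in A}\{T_a^{k}(b):k\in\mathbf{Z},\ b\in A\}.
$$
For $a\le n-2$ the set $A$ is invariant under $T_a$, so such $a$ contribute only $A$; for $a=n-1$ one has $T_{n-1}^{k}(b)\in\{0,\ldots,n-2\}\cup\{n-1,n\}$, and the value $n$ is attained (take $b=n-1$ and $k$ odd). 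Hence $G^n(0,0)=\{0,1,\ldots,n\}$, which properly contains $G^{n-1}(0,0)$.

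Consequently $[0]=\bigcup_{i=1}^{\infty}G^i(0,0)=X$, while every $G^n(0,0)$ is finite; therefore $[0]\ne G^n(0,0)$ for every $n$, i.e.\ the orbit of $0$ is infinitely generated, which establishes the theorem. (Such an example forces $X$ to be infinite: on a finite space the increasing chain $G^1(0,0)\subseteq G^2(0,0)\subseteq\cdots$ must stabilize, so every orbit would be finitely generated.) I expect the only real difficulty to be the design of the family $\{T_a\}$: it has to be arranged so that at each stage exactly one new index enters the current set and its transposition reaches exactly one new point, so that the chain never stabilizes; after that the verification is a routine induction. One could just as well take $G=\mathbf{Z}/2\mathbf{Z}$, since each $T_a$ is an involution.
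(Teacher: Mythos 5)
Your construction is correct, and it takes a genuinely different route from the paper's. You exploit the structural observation that a binary action is nothing more than a family of ordinary $G$-actions $\beta_{x_1}=\alpha(\,\cdot\,,x_1,\cdot\,)$ parametrized (continuously) by the first argument; this is exactly what conditions \eqref{eq(1)} and \eqref{eq(2)} say, so attaching the transposition $T_{x_1}=(x_1,\ x_1+1)$ to each point $x_1$ of the discrete space $X=\{0,1,2,\dots\}$ does give a binary $\mathbf{Z}$-action (or $\mathbf{Z}/2\mathbf{Z}$-action), and your induction $G^n(0,0)=\{0,1,\dots,n\}$ is a one-line verification at each stage, yielding an infinite orbit $[0]=X$ that no finite $G^n(0,0)$ can equal. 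The paper instead stays inside the family of binary actions $h(x_1,x_2)=x_1^{-1}hx_1x_2$ of a subgroup $H\le G$ on $G$ introduced earlier (Example~\ref{ex-(H,G)}), takes $H=\{e,h\}$ with $h,x$ two involutions in $GL(2,\mathbf{R})$ whose product $xh$ has infinite order, and shows by a case analysis on normal forms $(xh)^i$, $(xh)^ix$, $h(xh)^i$, $h(xh)^ix$ that each application of $H$ strictly increases the maximal power of $xh$ occurring, so $H^{n+1}(x,x)\neq H^n(x,x)$. Your example is more elementary and the growth of the sets $G^n(0,0)$ is completely transparent, at the price of building an ad hoc space; the paper's example is computationally heavier but has the virtue of living inside the algebraically natural class of binary $H$-spaces $(H,G,\alpha)$ that the rest of the paper studies, showing that infinitely generated orbits already occur there. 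Both proofs are complete and correct.
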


\begin{proof}
Let $G$ be an infinite topological group. Suppose that there exist elements $h,x\in G$ satisfying the following conditions:

(1) the elements $h$ and $x$ are of order 2: $h^2=x^2=e$, where $e$ is the identity element of $G$; 

(2) the element $xh$ is of infinite order.

For example, let $G=GL(2,\mathbf{R})$ be the topological group of nonsingular square matrices of order 2. Consider 
$$
h=
\begin{bmatrix}
1 &  0  \\
0 & -1 
\end{bmatrix},  \quad 
x=\begin{bmatrix}
-1 &  0 \\
1 & 1
\end{bmatrix}.
$$
It is easy to verify that the matrices $h$ and $x$ satisfy conditions (1) and (2).

Consider the subgroup $H=\{e,h\}$ of $G$. It acts binarily on $G$ by the rule \eqref{eq-action} (see Example \ref{ex-(H,G)}). Let us prove that, in this binary $H$-space, the orbit of the element $x$ is infinitely generated. To this end, it suffices to show that $H^{n+1}(x,x)\neq H^n(x,x)$ for all positive integers $n$.

It immediately follows from the definition of the binary action \eqref{eq-action} and condition (1) that any element of $H^n(x,x)$ different from $x$ has one of the following forms:
$${\rm(i)} \ \  (xh)^i, \qquad {\rm(ii)} \ \ (xh)^ix, \quad {\rm(iii)} \ \ h(xh)^i, \qquad {\rm(iv)} \ \ h(xh)^ix,$$
where $i$ is a positive integer. For example,
\begin{equation*}
    e(x,x)=x,
\end{equation*}
\begin{equation*}
    h(x,x)=x^{-1}hxx=xh,
\end{equation*}
\begin{equation*}
    h(h(x,x),x)=(xh)^{-1}h(xh)x=hxhxhx=h(xh)^2x,
\end{equation*}
\begin{equation*}
    h(x,h(x,x))=x^{-1}hxh(x,x)=xhxxh=x,
\end{equation*}
\begin{equation*}
    h(h(x,x),h(x,x))=h(xh,xh)=(xh)^{-1}h(xh)xh=hxhxhxh=h(xh)^3.
\end{equation*}
Therefore,
$$H^1(x,x)=\{x,xh\}, \quad  H^2(x,x)=\{x,xh, h(xh)^2x, h(xh)^3\}.$$

Since $H^{n}(x,x)$ is finite, there exists an element $y\in H^{n}(x,x)$ which contains $xh$ to the highest power $k$. Let us prove that at least one of the elements $h(x,y)$ and $h(xh,y)$ belongs to $H^{n+1}(x,x)$ and does not belong to $H^{n}(x,x)$. Consider the possible cases.

Case 1. Suppose that $y$ has the form (i): $y=(xh)^k$. Then
\begin{multline*}
h(xh,y)=h(xh,(xh)^k)= (xh)^{-1}h(xh)(xh)^k= \\
=hxhxh(xh)^k=h(xh)^{k+2}.
\end{multline*}

Case 2. If $y$ has the form $y=(xh)^kx$, then
\begin{multline*}
    h(xh,y)=h(xh,(xh)^kx)= (xh)^{-1}h(xh)(xh)^kx= \\
    = hxhxh(xh)^kx=h(xh)^{k+2}x.
\end{multline*}

Case 3. If $y$ has the form  $y=h(xh)^k$, then
$$h(x,y)=h(x,h(xh)^k)= x^{-1}hxh(xh)^k= xhxh(xh)^k=(xh)^{k+2}.$$ 

Case 4. If $y$ has the form $y=h(xh)^kx$, then
$$h(x,y)=h(x,h(xh)^kx)= x^{-1}hxh(xh)^kx= xhxh(xh)^kx=(xh)^{k+2}x.$$ 

In all cases, the element specified above belongs to the set $H^{n+1}(x,x)$ but does not belong to $H^{n}(x,x)$, because $xh$ has infinite order and its power is higher than $k$. Therefore,
$$H^{n+1}(x,x)\neq H^n(x,x).$$ 

This completes the proof of the theorem.
\end{proof}

\bibliographystyle{plain}
\bibliography{gevorgyan_nazaryan}

\end{document}